\newtheorem{theorem}{Theorem}
\newtheorem{conjecture}{Conjecture}
\newtheorem{lemma}{Lemma}
\newtheorem{definition}{Definition}
\newcommand*\samethanks[1][\value{footnote}]{\footnotemark[#1]} 
\tikzset{every node/.style={circle}}
\tikzstyle{g0}=[scale=0.5,fill,outer sep=0]
\tikzstyle{g1}=[draw,inner sep=0,outer sep=0]
\title{Bounds on the Inducibility of Double Loop Graphs}
\author{S.~Y. Chan\thanks{Deakin University, Geelong, Australia, School of Information Technology, Faculty of Science Engineering \& Built Environment, \underline{Australia}}        
    \and    
    K. Morgan\samethanks
  
    \and
    J. Ugon\samethanks[1]}
\date{}
\begin{document}
\maketitle
\vspace{-6mm}
\begin{abstract}
In the area of extremal graph theory, there exists a problem that investigates the maximum induced density of a $k$-vertex graph $H$ in any $n$-vertex graph $G$. This is known as the problem of \emph{inducibility} that was first introduced by Pippenger and Golumbic in 1975. In this paper, we  give a new upper bound for the inducibility for a family of \emph{Double Loop Graphs} of order $k$. The upper bound obtained for order $k=5$ is within a factor of 0.964506 of the exact inducibility, and the upper bound obtained for $k=6$ is within a factor of 3 of the best known lower bound. 

\end{abstract}

\section{Introduction}

A problem in extremal graph theory investigates the maximum induced density of a $k$-vertex graph $H$ in any $n$-vertex graph $G$. The asymptotic behaviour of the limit of this maximum induced density of $H$ in different classes of graphs have been studied. This relates to the problem of inducibility that was first introduced by \citet{pippenger75} in 1975, which was defined as follows:

Given a graph $H$ of order $k$, we count the maximum number of $k$-vertex subsets in a graph G of order $n \geq k$, that induce a copy of $H$. Let $\mathcal{P}(H,G)$ denote the number of induced copies $H$ (up to isomorphism) in $G$, where $G$ and $H$ are simple graphs. This number lies between 0 and $\binom{n}{k}$. Since we are only interested in the limit as $n\rightarrow \infty$, we normalise $\mathcal{P}(H,G)$ by defining the \textit{inducibility} of $H$

\begin{equation}
\mathcal{I}(H,G)=\dfrac{\mathcal{P}(H,G)}{\binom{n}{k}}.   
\end{equation}

Let $\mathcal{P}(H, n)$ denote the maximum number of induced copies of $H$ taken over all graphs of order $n$. Again, we normalise this by setting
\begin{equation}
\mathcal{I}(H ,n) = \dfrac{\mathcal{P}(H,n)}{\binom{n}{k}}. 
\end{equation}
Note that
\begin{equation}
0 \leq  \mathcal{I}(H,G), \mathcal{I}(H, n) \leq 1.     
\end{equation}

Pippenger and Golumbic showed that for any graph $H$, the sequence $\mathcal{I}(H,n)$ is non-increasing and bounded below by 0 and approaches a limit as $n \rightarrow \infty$. Thus, the inducibility of $H$ is defined as

\begin{equation}
\mathcal{I}(H)=\lim_{n\rightarrow \infty} \mathcal{I}(H,n).  
\end{equation} 

The exact inducibility $\mathrm{I}(H)$ of only a handful of graphs and graph classes is currently known. This includes some graphs of small order \cite{balogh2016,evenzohar2015,hirst2011} and complete multipartite graphs \cite{bollobas1976,brownsidorenko1994}. Pippenger and Golumbic gave an initial lower bound on the inducibility for all $k$-vertex graphs,

\begin{equation}
\mathcal{I}(H)\geq \dfrac{k!}{k^{k}-k} \geq \dfrac{(2\pi k)^{\frac{1}{2}}}{e^{k}}.   
\label{eq4.5}
\end{equation}  

This lower bound was obtained using a \emph{balanced iterated blow-up of graph $H$}, a graph obtained by an iterative process of replacing each vertex of $G$ with copies of $H$. This lower bound that was obtained using this method of construction was later proven to be tight for any chosen graph $H$ by \citet{fox2021}. The authors showed, for a random graph $H$, that the maximum number of induced copies of $H$ in $G$ is best obtained by the balanced iterated blow-ups of $H$.

Trivially, we note that the complete graph $K_{n}$ and its complement $\overline{K_{n}}$ have inducibility 1. The inducibility $\mathcal{I}(H) = \mathcal{I}(\overline{H})$, thus, one need only consider one graph from each complementary pair to obtain the inducibility. The exact inducibilities of graphs on less than 4 vertices are known, except for the self-complementary path on 4 vertices ($P_{4}$). Table \ref{known inducibility} gives some results and constructions from \citet{exoo1986}, giving the known exact inducibility of graphs of small order and their respective extremal constructions.

Pippenger and Golumbic provided results on some families of graphs in \cite{pippenger75}. For cycles $C_{k}$, the authors conjectured the following lower bound and showed that this lower bound is tight for this family of graphs.

\begin{conjecture}[\citet{pippenger75}]
For $k\geq5$, $\mathcal{I}(C_{k})=\dfrac{k!}{k^{k}-k}$.
\label{pip conjecture}
\end{conjecture}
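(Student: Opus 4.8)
The plan is to prove the two matching inequalities $\mathcal{I}(C_k)\ge k!/(k^k-k)$ and $\mathcal{I}(C_k)\le k!/(k^k-k)$ separately; the lower bound is the easy half and the upper bound is where essentially all the work lies.

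\textbf{Lower bound.} This comes from the construction behind~\eqref{eq4.5}. Let $B_0$ be a single vertex and, for $m\ge1$, let $B_m$ be the balanced blow-up of $C_k$ in which each of the $k$ parts is a disjoint copy of $B_{m-1}$ and consecutive parts are joined completely. The key observation is that, because $C_k$ with $k\ge5$ is triangle-free, twin-free (no two vertices share a neighbourhood), and contains no $C_4$ even as a non-induced subgraph, a $k$-set of $B_m$ that meets the top-level parts non-trivially cannot induce $C_k$ at all: if it meets two consecutive parts in sizes $a,b\ge1$, then $C_k$ would contain $K_{a,b}$, forcing $\{a,b\}=\{1,k-1\}$, impossible for $k\ge5$; if it meets a longer arc of parts, an interior part produces twins; and if the parts it meets do not form an arc, the induced graph is disconnected. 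Hence every induced $C_k$ in $B_m$ either uses exactly one vertex from each part or lies inside a single part, so the density satisfies $c(B_m)=k!/k^k + c(B_{m-1})/k^{k-1}$ as the part sizes grow, and the fixed point of this recursion is exactly $k!/(k^k-k)$; combined with the monotonicity of $\mathcal{I}(C_k,n)$ this yields the bound.

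\textbf{Upper bound.} I would take a graph $G$ on $n$ vertices that (nearly) maximizes the induced-$C_k$ density $c(G)$ and argue that it must essentially be an iterated blow-up of $C_k$. The two properties of $C_k$ that make $k\ge5$ the correct threshold — twin-freeness (which fails for $C_4$, explaining why $\mathcal{I}(C_4)$ is strictly larger) and triangle-freeness/large girth — would be used respectively to collapse near-modules of $G$ to single vertices without decreasing the $C_k$-count, and to constrain how an induced $C_k$ can sit across a dense bipartite piece. The heart of the matter is a stability statement: a near-extremal $G$ admits a partition $V(G)=V_1\cup\cdots\cup V_k$ into almost-equal parts so that all but $o(n^k)$ of the induced $C_k$'s either use exactly one vertex per part or lie entirely inside one part. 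Granting this, recursing inside each $V_i$ gives $c\le k!/k^k + c/k^{k-1}+o(1)$ for $c=\sup_G c(G)$, which forces $c\le k!/(k^k-k)$ and matches the lower bound.

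The step I expect to be the main obstacle is precisely this stability/partition claim: one must rule out qualitatively different near-optimal configurations (unbalanced blow-ups, blow-ups of other regular triangle-free graphs, and mixtures of these) and control all error terms uniformly in $k$, plausibly via a regularity- or removal-lemma argument tailored to cycles, or via the analytic formalism of graph limits. For the specific small orders $k=5$ and $k=6$ one could instead bypass the general argument and certify $c(G)\le k!/(k^k-k)$ directly through flag algebras / semidefinite programming, then convert the resulting numerical certificate into an exact inequality — less illuminating, but algorithmic.
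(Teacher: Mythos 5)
The statement you are asked to prove is a \emph{conjecture}, and the paper treats it as such: it explicitly notes that Conjecture~\ref{pip conjecture} remains unsolved except for $k=5$, where Balogh et al.\ obtained $\mathcal{I}(C_5)=\frac{1}{26}$ via flag algebras. There is therefore no proof in the paper to compare against, and your proposal does not supply one either. The lower-bound half is fine: the balanced iterated blow-up of $C_k$, together with the fact that $C_k$ ($k\ge5$) has maximum degree $2$ and girth at least $5$ (so it contains neither $K_{1,3}$ nor $K_{2,2}$ as a subgraph), does force every induced copy in $B_m$ to be either a transversal or contained in a single part, and the resulting recursion $c = k!/k^k + c/k^{k-1}$ has fixed point $k!/(k^k-k)$. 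This is exactly the construction behind~\eqref{eq4.5} and gives $\mathcal{I}(C_k)\ge k!/(k^k-k)$.

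The upper bound is where the proposal has a genuine gap, and you have named it yourself: the ``stability/partition claim'' that a near-extremal graph decomposes into $k$ almost-equal parts so that all but $o(n^k)$ induced copies are transversal or internal. No argument is offered for this claim, and it is precisely the content of the open conjecture --- it cannot be assumed or deferred. The best known upper bounds for $k\ge6$ (the $\frac{128}{81}e\cdot\frac{k!}{k^k}$ bound of Hefetz and Tyomkyn and the $(2+o(1))\cdot\frac{k!}{k^k}$ bound of Theorem~\ref{theorem1}) still exceed the conjectured value by a constant factor, which is direct evidence that the known local/entropy-counting and flag-algebra machinery does not yet yield the required stability statement. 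Ruling out competing near-extremal configurations (unbalanced blow-ups, blow-ups of other triangle-free graphs, mixtures) is not a technical loose end but the heart of the problem. Your fallback of certifying $k=5$ by semidefinite programming reproduces the one known case but does not address $k\ge6$. As written, the proposal establishes only the lower bound.
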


The condition where $k\geq 5$ is necessary, as $C_{3} = K_{3}$ and $C_{4} = K_{2,2}$ and the inducibility of $K_{n}$ and $K_{n,n}$ are precisely known. However, Conjecture \ref{pip conjecture} still remains unsolved \cite{hefetz2017}. Further, the authors also showed that

\begin{equation}
\mathcal{P}(C_{k},n)\leq \dfrac{2n}{k} \cdot \mathlarger(\dfrac{n-1}{k-1}\mathlarger)^{k-1}    
\end{equation}
such that
\begin{equation}
\mathcal{I}(C_{k})\leq 2e \cdot \dfrac{k!}{k^{k}}. 
\label{eq:bound1}
\end{equation}

Equation \eqref{eq:bound1} decreases the gap between the upper bound and lower bound of $\mathcal{I}(C_{k})$ with a multiplicative factor of $2e$. This upper bound was later improved by Hefetz and Tyomkyn (Theorem 4.0.2) in \cite{hefetz2017} for all cycles of size $k\geq 6$. The bound by Hefetz and Tyomkyn improved the gap to a factor of $\frac{128}{81}e$. Note that the exact inducibility for cycles of size $k = 5$ was solved by \citet{balogh2016}, by using a nested blow-up of $C_{5}$, to obtain $\mathcal{I}(C_{5}) = \frac{1}{26}$. 

\begin{theorem}[\citet{kral2019}]
Let $G$ be a graph of size $n$ and $C_{k}$ be a cycle of length $k\geq6$. Then $\mathcal{P}(C_{k},n)\leq \dfrac{2n^{k}}{k^{k}}$.
\label{theorem1}
\end{theorem}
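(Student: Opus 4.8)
The plan is to reduce the statement to a purely combinatorial counting bound, and then to attack that bound by a localisation-plus-optimisation argument. First I would pass from unlabelled to labelled copies: let $N^{*}(G)$ be the number of ordered $k$-tuples $(v_{1},\dots ,v_{k})$ of distinct vertices of $G$ with $v_{i}v_{i+1}\in E(G)$ for all $i$ (indices modulo $k$) and $v_{i}v_{j}\notin E(G)$ for every non-consecutive pair. Each induced copy of $C_{k}$ is counted exactly $2k$ times, once for each of the $k$ rotations and $2$ orientations of its cyclic order, so $\mathcal{P}(C_{k},n)=N^{*}(G)/2k$ and it suffices to prove $N^{*}(G)\le 4n^{k}/k^{k-1}$. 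It is worth recording at the outset that the balanced blow-up of $C_{k}$ — parts $V_{1},\dots ,V_{k}$ each of size $n/k$ with consecutive parts joined completely — already produces $(n/k)^{k}$ induced copies, so the target $2n^{k}/k^{k}$ is exactly a factor $2$ away from this construction; in fact Pippenger's conjecture predicts the truth is the iterated blow-up value $\sim n^{k}/(k^{k}-k)$, so the factor $2$ must be slack.

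The second step is to localise the count and then relax. Writing $N^{*}(G)=\sum_{(x,y):\,xy\in E(G)}\Lambda(x,y)$, where $\Lambda(x,y)$ counts the completions $(v_{3},\dots ,v_{k})$ of the directed edge $x\to y$ to an ordered induced cycle $(x,y,v_{3},\dots ,v_{k})$, one is led to estimate, for non-adjacent vertices $a,b$ and a set $S$ of already-used vertices, the number of induced $a$–$b$ paths of prescribed length whose internal vertices avoid $S$ and the neighbourhoods forced by the cycle. Rather than iterating this naively, I would relax $G$ to an edge-weighted graph (equivalently a step graphon), rewrite $N^{*}(G)/n^{k}$ as an integral over a $k$-cycle of continuous variables of a product of adjacency weights on the $k$ cycle-edges and non-adjacency weights on the remaining $\binom{k}{2}-k$ pairs, and optimise this functional. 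Convexity and AM–GM should reduce the optimum to a bounded finite optimisation — over how ``mass'' is distributed among a bounded number of vertex classes and how densely consecutive classes are joined — whose maximiser is the blow-up configuration, up to the stated slack.

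The hard part is precisely the interaction between the $k$ adjacency constraints and the $\binom{k}{2}-k$ non-adjacency constraints. A recursion that only tracks ``the next vertex is a neighbour of the previous one'' throws away all the non-edges and yields merely $N^{*}(G)\le n^{k}$, i.e.\ $\mathcal{P}(C_{k},n)\le n^{k}/(2k)$, which is weaker than the claim by a factor of order $k^{k}/k$; it is the non-adjacency constraints that force the vertices of an induced cycle to spread out across the graph and hence are responsible for the $k^{-k}$-type saving. Incorporating all of these constraints simultaneously in the optimisation — proving that no configuration does better than the balanced blow-up up to a constant — is the crux, and the constant one gets this way is $2$ rather than the conjectured $1$. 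Sharpening it to the exact inducibility would require the finer tools (flag algebras, stability/regularity arguments) that were used, for instance, to pin down $\mathcal{I}(C_{5})=\tfrac{1}{26}$.
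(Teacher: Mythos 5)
Your preliminary reductions are fine: passing to ordered tuples with the dihedral factor $2k$, and observing that the balanced blow-up already achieves $n^{k}/k^{k}$ induced copies, so the claimed bound is tight up to the factor $2$. But from that point on what you have written is a plan rather than a proof, and the plan's central step is exactly the hard part of the theorem. Saying that one should ``rewrite $N^{*}(G)/n^{k}$ as an integral over a graphon and optimise, and convexity and AM--GM should reduce the optimum to a bounded finite optimisation whose maximiser is the blow-up configuration'' is a restatement of the statement to be proved, not an argument for it: the functional you describe (a product of $k$ adjacency weights and $\binom{k}{2}-k$ non-adjacency weights integrated over a graphon) is neither convex nor concave in the graphon, and you exhibit no finite-dimensional reduction and no mechanism by which AM--GM controls the non-adjacency factors. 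You correctly name the crux --- the interaction between the adjacency and non-adjacency constraints --- but you do not supply the idea that resolves it, and nothing in the proposal rules out a configuration beating the blow-up by more than a factor of $2$.

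For comparison, the actual argument of Kr\'{a}l', Norin and Volec (which the proof of Theorem \ref{thm:DLG} in this paper imitates) avoids any global extremal optimisation. One defines a probability distribution on ordered tuples built greedily --- $v_{0}$ uniform among all $n$ vertices, each subsequent $v_{i}$ uniform among the vertices that extend the current prefix admissibly --- so that the total probability mass over all maximal tuples is at most $1$. Then, for a single fixed induced copy of $C_{k}$, one considers its $2k$ orderings $D_{0},\dots,D_{k-1}$ (and their reversals) simultaneously and lower-bounds $\sum_{j}P(D_{j})$ by applying AM--GM \emph{across the rotations}, using the key combinatorial observation that any candidate vertex $v'$ contributes to the branching count $n_{i,j}$ for at most one position $i$ in each rotation $j$, because the adjacency/non-adjacency window that qualifies $v'$ as a candidate at step $i$ is disjoint from the one at step $i'$. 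Summing over positions, rotations, and all $n$ vertices bounds the relevant sum by $nk$, and this converts into $\sum_{j}P(D_{j})\geq k^{k}/(2n^{k})$, whence at most $2n^{k}/k^{k}$ induced copies. This local double-counting over the orderings of one cycle is precisely where the $k^{-k}$ saving comes from; without it, or an equally concrete substitute, your proposal does not constitute a proof.
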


Theorem \ref{theorem1} improves the upper bound for $\mathcal{I}(C_{k})$ further closing the gap between the upper and lower bounds by a multiplicative factor of 2 as follows

\begin{equation}
    \dfrac{k!}{k^{k}-k}\leq \mathcal{I}(C_{k})\leq (2+o(1))\cdot \dfrac{k!}{k^{k}}.
\end{equation}

Pippenger and Golumbic also computed the inducibility of $K_{\rho,\rho}$ and $K_{\rho,\rho+1}$. Note that for simplicity, we rewrite their original theorem (Theorem 10 from \cite{pippenger75}) as 

\begin{theorem}[\citet{pippenger75}]
For all $n\geq 2\rho \geq 3$: 
The maximum number of induced subgraphs isomorphic to the complete bipartite graph $K_{\rho,\rho}$ in any graph of order $n$ is 
\[ 
\mathcal{P}(K_{\rho,\rho},n)= \binom{\lceil{\frac{n}{2}}\rceil}{\rho} \binom{\lfloor{\frac{n}{2}}\rfloor}{\rho},
\]
and the inducibility is given by 
\[
\mathcal{P}(K_{\rho,\rho})=\dfrac{\binom{2\rho}{\rho}}{2^{2\rho}}.
\]
Similarly, for the complete bipartite graph $K_{\rho,\rho+1}$, where $n\geq 2\rho +1$, they have
\[ 
\mathcal{P}(K_{\rho,\rho+1},n)= \binom{\lceil{\frac{n}{2}}\rceil}{\rho} \binom{\lfloor{\frac{n}{2}}\rfloor}{\rho+1}+\binom{\lfloor{\frac{n}{2}}\rfloor}{\rho}\binom{\lceil{\frac{n}{2}}\rceil}{\rho}, 
\]
and
\[
\mathcal{P}(K_{\rho,\rho+1})=\dfrac{\binom{2\rho+1}{\rho}}{2^{2\rho}}.
\]
\end{theorem}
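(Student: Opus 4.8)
\emph{Strategy and the construction.} I would prove the asserted equality in two directions. For the lower bound, take $G_0=K_{\lceil n/2\rceil,\lfloor n/2\rfloor}$ with parts $X,Y$; since every induced subgraph of a complete bipartite graph is complete bipartite, an induced copy of $K_{\rho,\rho}$ in $G_0$ is exactly a choice of a $\rho$-subset of $X$ together with a $\rho$-subset of $Y$, so $\mathcal P(K_{\rho,\rho},G_0)=\binom{\lceil n/2\rceil}{\rho}\binom{\lfloor n/2\rfloor}{\rho}$; for $K_{\rho,\rho+1}$ one instead picks $\rho$ vertices from one part and $\rho+1$ from the other, in either of the two ways, which produces the displayed two-term count. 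The hypothesis $\rho\ge 2$, which $2\rho\ge 3$ forces, is genuinely needed: for $\rho=1$ the statement is false, as $\mathcal P(K_{1,1},K_n)=\binom n2$. Everything else is the matching upper bound $\mathcal P(K_{\rho,\rho},G)\le\binom{\lceil n/2\rceil}{\rho}\binom{\lfloor n/2\rfloor}{\rho}$ (and its analogue for $K_{\rho,\rho+1}$) for every $n$-vertex $G$.

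\emph{Reducing to complete multipartite hosts.} Fix an extremal $G$. I would first argue that $G$ may be taken complete multipartite, i.e.\ that non-adjacency is transitive on $V(G)$: if it is not, pick $u,v,w$ with $uv,uw\notin E(G)$ and $vw\in E(G)$ and apply a Zykov-type rewiring --- replace the neighbourhood of one of $v,w$ outside $\{u,v,w\}$ by that of the other, in whichever direction does not decrease $\mathcal P(K_{\rho,\rho},\cdot)$ --- and iterate, with a potential function to force termination. It is sometimes cleaner to run this symmetrization on the graphon limit of a near-optimal sequence, or to pass to the complement, using $\overline{K_{\rho,\rho}}=K_\rho\sqcup K_\rho$ and $\mathcal I(H)=\mathcal I(\overline H)$. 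An alternative that sidesteps symmetrization entirely is an edge-recursion: writing $g_t(P,Q)$ for the number of induced copies of $K_{t,t}$ in $G$ with one side inside $P$ and the other inside $Q$, and $P_{vw}=N(w)\setminus(N(v)\cup\{v,w\})$, $Q_{vw}=N(v)\setminus(N(w)\cup\{v,w\})$, the copies of $K_{\rho,\rho}$ through a fixed edge $vw$ correspond to induced copies of $K_{\rho-1,\rho-1}$ with one side in $P_{vw}$ and the other in $Q_{vw}$, whence
\[
\mathcal P(K_{\rho,\rho},G)=\frac{1}{\rho^2}\sum_{vw\in E(G)} g_{\rho-1}(P_{vw},Q_{vw})
\]
(each copy counted once for each of its $\rho^2$ edges). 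This sets up an induction on $\rho$: bounding the summand by $\binom{|P_{vw}|}{\rho-1}\binom{|Q_{vw}|}{\rho-1}=\binom{d(w)-c_{vw}-1}{\rho-1}\binom{d(v)-c_{vw}-1}{\rho-1}$, with $c_{vw}=|N(v)\cap N(w)|$, turns the problem into an extremal inequality on the degree and co-degree sequence, tight on $K_{\lceil n/2\rceil,\lfloor n/2\rfloor}$.

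\emph{The finite optimization and the inducibility.} Once $G=K_{n_1,\dots,n_r}$, any induced $K_{\rho,\rho}$ meets exactly two parts --- three parts force a triangle, one part is an independent set --- with $\rho$ vertices in each, so $\mathcal P(K_{\rho,\rho},G)=\sum_{i<j}\binom{n_i}{\rho}\binom{n_j}{\rho}$, and it remains to maximize this over compositions $n_1+\dots+n_r=n$. This is a routine exchange argument: parts of size below $\rho$ contribute nothing and can be absorbed, and if $r\ge 3$ then merging the two smallest surviving parts does not decrease the sum --- by Vandermonde's identity $\binom{n_i+n_j}{\rho}=\sum_k\binom{n_i}{k}\binom{n_j}{\rho-k}$, whose $k=1$ term is available because $\rho\ge 2$, the gain from enlarging a part dominates the loss --- so the optimum has $r=2$, and $a\mapsto\binom a\rho\binom{n-a}\rho$ is unimodal with maximum at $a=\lceil n/2\rceil$ by a ratio test on consecutive values. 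For $K_{\rho,\rho+1}$ the complete-multipartite count is $\sum_{i<j}\bigl(\binom{n_i}{\rho}\binom{n_j}{\rho+1}+\binom{n_i}{\rho+1}\binom{n_j}{\rho}\bigr)$ and the same argument again selects the balanced bipartition. Finally, dividing the extremal count by $\binom n{2\rho}$ (resp.\ $\binom n{2\rho+1}$) and letting $n\to\infty$, Stirling's formula gives $\binom{\lceil n/2\rceil}{\rho}\binom{\lfloor n/2\rfloor}{\rho}\sim n^{2\rho}/(2^{2\rho}(\rho!)^2)$ and $\binom n{2\rho}\sim n^{2\rho}/(2\rho)!$, so the ratio tends to $\binom{2\rho}{\rho}/2^{2\rho}$, and similarly one obtains $\binom{2\rho+1}{\rho}/2^{2\rho}$; these limits are $\mathcal I(K_{\rho,\rho})$ and $\mathcal I(K_{\rho,\rho+1})$ by definition (the limits exist, consistent with the non-increasingness of $\mathcal I(H,n)$).

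\emph{The main obstacle.} I expect the reduction to complete multipartite hosts to be the hard part. Unlike clique counts, induced-subgraph counts are not manifestly monotone under Zykov-type neighbourhood replacements, because such a move simultaneously creates and destroys copies of $K_{\rho,\rho}$; making the net change provably non-negative requires choosing the rewiring with care --- for instance processing the bad triples in a well-chosen order, or exhibiting a monovariant. Everything downstream of that structural reduction --- the finite optimization over complete multipartite graphs and the asymptotic evaluation --- is routine.
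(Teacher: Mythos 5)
First, a remark on scope: the paper does not prove this statement at all --- it is a restatement of Theorem 10 of Pippenger and Golumbic, cited without proof --- so there is no in-paper argument to compare yours against, and your proposal must stand on its own. The parts of it that are carried out are correct: the lower-bound construction on $K_{\lceil n/2\rceil,\lfloor n/2\rfloor}$, the observation that $\rho\geq 2$ is what makes an induced $K_{\rho,\rho}$ choose its $\rho$-sets from within the two parts, the finite optimization over complete multipartite hosts (absorb parts of size below $\rho$, merge the two smallest remaining parts using Vandermonde, then balance), and the Stirling computation of the limits. One small point: your construction yields $\binom{\lceil n/2\rceil}{\rho}\binom{\lfloor n/2\rfloor}{\rho+1}+\binom{\lfloor n/2\rfloor}{\rho}\binom{\lceil n/2\rceil}{\rho+1}$, which is what the theorem should say; the second term as displayed in the paper, $\binom{\lfloor n/2\rfloor}{\rho}\binom{\lceil n/2\rceil}{\rho}$, is a misprint (it counts $2\rho$ vertices, not $2\rho+1$).

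The genuine gap is the one you flag yourself, and flagging it does not fill it: the reduction of an arbitrary extremal $G$ to a complete multipartite graph is the entire content of the theorem, and neither of your two routes establishes it. The Zykov-type rewiring ``in whichever direction does not decrease $\mathcal{P}(K_{\rho,\rho},\cdot)$'' presupposes that one of the two directions is non-decreasing, which is exactly what fails to be evident for induced-subgraph counts: replacing $N(v)$ by $N(w)$ destroys every copy through $v$ and rearranges every copy through both $v$ and $w$, and no monovariant, processing order, or convexity inequality is exhibited that makes the net change sign-definite. (This can be repaired --- Brown and Sidorenko do it via a convexity argument for complete multipartite $H$ --- but that is a real lemma, not a remark.) The alternative edge-recursion is likewise incomplete: the identity $\mathcal{P}(K_{\rho,\rho},G)=\rho^{-2}\sum_{vw\in E(G)}g_{\rho-1}(P_{vw},Q_{vw})$ is correct, but after you weaken the summand to $\binom{|P_{vw}|}{\rho-1}\binom{|Q_{vw}|}{\rho-1}$ you are left with a degree/codegree inequality that is asserted to be ``tight on $K_{\lceil n/2\rceil,\lfloor n/2\rfloor}$'' but never proved, and it is not clear that this relaxation --- which discards all independence constraints inside $P_{vw}$ and $Q_{vw}$ --- remains true for every $G$; nor is it an induction in any operative sense, since the inductive hypothesis for $\rho-1$ is never invoked. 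Until one of these two routes is actually closed, the upper bound, and hence the theorem, is unproved.
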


\citet{pippenger75} also provided a bound for some operations on graphs. Take a graph $G$ of order $k$ and $G'$ of order $k'$. Then the \emph{conjunction} $G\wedge G'$ is a single graph that consists of the disjoint graphs $G$ and $G'$. The \emph{disjunction} $G \vee G'$ is a connected graph that contains the disjoint graphs $G$ and $G'$ but with all possible edges added between $G$ and $G'$. The authors gave the following bound:

\begin{theorem}[Theorem 6~\citet{pippenger75}]
Let $G$ be a graph of order $k$ and $G'$ a graph of order $k'$,
\begin{equation}
\begin{rcases}
\mathcal{I}(G\wedge G')\\
\mathcal{I}(G\vee G')
\end{rcases}
\geq \dfrac{(k+k')!k^{k}k'^{k'}}{k!k'!(k+k')^{k+k'}}\mathcal{I}(G)\mathcal{I}(G')
\end{equation}
\label{theorem6pip}
\end{theorem}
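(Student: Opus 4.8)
The plan is to prove Theorem~\ref{theorem6pip} by a direct blow-up (substitution) construction, exactly mirroring the iterated-construction philosophy that underlies the Pippenger--Golumbic lower bound \eqref{eq4.5}. Let $A$ be a graph on $N$ vertices achieving (essentially) the optimal induced density $\mathcal{I}(G)$ of $G$, and let $B$ be a graph on $M$ vertices achieving $\mathcal{I}(G')$ of $G'$. I would build a graph $C$ on $n$ vertices by partitioning $V(C)$ into $N$ ``clusters'' of size $\alpha n$ (indexed by $V(A)$) and $M$ clusters of size $\beta n$ (indexed by $V(B)$), where $N\alpha + M\beta = 1$ and the proportions $\alpha,\beta$ are chosen later to optimize. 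Between two $A$-clusters put all edges or no edges according to whether the corresponding vertices of $A$ are adjacent; likewise for $B$-clusters; and between every $A$-cluster and every $B$-cluster put \emph{all} edges (for $G\vee G'$) or \emph{no} edges (for $G\wedge G'$). Inside each cluster, recursively place the same kind of optimal configuration so the induced densities inside clusters are preserved in the limit.

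Next I would count induced copies of $G\wedge G'$ (resp.\ $G\vee G'$) in $C$. The copies I track are those that place $k$ of their vertices inside the $A$-clusters — distributed so that the chosen vertices, viewed at the cluster level, induce a copy of $G$ in $A$ — and the other $k'$ vertices inside the $B$-clusters so that at the cluster level they induce a copy of $G'$ in $B$; moreover at most one chosen vertex lies in any single cluster, so that the ``cross'' edges between the $G$-part and the $G'$-part are exactly all-present (for $\vee$) or all-absent (for $\wedge$), which is precisely the adjacency pattern of the conjunction/disjunction. The number of such copies is, up to the $\binom{n}{k+k'}$ normalization and lower-order terms, the product of (i) the density of copies of $G$ among the $A$-clusters, namely $\mathcal{I}(G)$; (ii) the density of copies of $G'$ among the $B$-clusters, namely $\mathcal{I}(G')$; (iii) the multinomial factor $\binom{k+k'}{k}=\frac{(k+k')!}{k!\,k'!}$ counting which of the $k+k'$ slots go to the $A$-side; and (iv) the probability-mass factors $\alpha^{k}$ and $\beta^{k'}$ coming from choosing all $k$ $A$-vertices from the total mass $N\alpha$ versus the whole vertex set, and similarly $\beta^{k'}$ — more precisely $(N\alpha)^k/N^k \cdot (M\beta)^{k'}/M^{k'}$ times $N^k M^{k'}$-type bookkeeping that collapses to $\alpha^k\beta^{k'}$ after dividing by $\binom{n}{k+k'}\sim n^{k+k'}/(k+k')!$.

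Assembling these, $\mathcal{I}(G\wedge G')$ and $\mathcal{I}(G\vee G')$ are each at least
\[
\frac{(k+k')!}{k!\,k'!}\;\alpha^{k}\beta^{k'}\;\mathcal{I}(G)\,\mathcal{I}(G'),
\]
valid for every admissible split with $N\alpha+M\beta=1$. The last step is to optimize the free parameters $\alpha,\beta$ subject to this constraint. Writing $\alpha = t/N$, $\beta=(1-t)/M$ for $t\in(0,1)$, the quantity $\alpha^k\beta^{k'}$ becomes $\frac{t^k(1-t)^{k'}}{N^k M^{k'}}$, and elementary calculus (a Beta-type maximization) gives the maximizer $t = \frac{k}{k+k'}$, whence $t^k(1-t)^{k'} = \frac{k^k k'^{k'}}{(k+k')^{k+k'}}$. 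Substituting back yields exactly
\[
\mathcal{I}(G\wedge G'),\ \mathcal{I}(G\vee G')\ \ge\ \frac{(k+k')!\,k^{k}k'^{k'}}{k!\,k'!\,(k+k')^{k+k'}}\,\mathcal{I}(G)\,\mathcal{I}(G'),
\]
which is the claimed bound.

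The main obstacle — and the only place needing care beyond bookkeeping — is justifying that the induced copies I have \emph{not} counted (those with two chosen vertices in a common cluster, or with the $A$/$B$ split at the cluster level not itself inducing a clean $G$/$G'$) do not need to be counted: we only need a lower bound, so we may simply discard them, but I must make sure the copies I \emph{do} count are genuinely induced copies of the right graph, i.e.\ that a single vertex per cluster together with the cluster-level adjacency pattern reproduces $G\wedge G'$ (resp.\ $G\vee G'$) exactly, with no spurious or missing edges on the cross-pairs; this is where the ``all edges / no edges between every $A$-cluster and every $B$-cluster'' choice is essential. A secondary point is the standard limiting argument: using near-optimal finite $A,B$ and letting their orders grow (or invoking that $\mathcal{I}(G,n)\to\mathcal{I}(G)$) so that the densities $\mathcal{I}(G)$, $\mathcal{I}(G')$ are attained in the limit and the $o(1)$ error terms from integer rounding of cluster sizes vanish as $n\to\infty$.
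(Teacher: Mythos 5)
The paper only cites this result from Pippenger and Golumbic without reproducing a proof, but your argument is correct and is essentially their original one: split the vertex set in the ratio $k:k'$, place (near-)extremal configurations for $G$ and $G'$ on the two sides, add all cross edges for $G\vee G'$ or none for $G\wedge G'$, and optimize $t^{k}(1-t)^{k'}$ at $t=\frac{k}{k+k'}$ to recover the constant $\frac{(k+k')!\,k^{k}k'^{k'}}{k!\,k'!\,(k+k')^{k+k'}}$. The extra blow-up layer (clusters of size $\alpha n$, $\beta n$ with recursion inside) is harmless but unnecessary, since one can simply take the two sides to be extremal graphs on $\lfloor\lambda n\rfloor$ and $n-\lfloor\lambda n\rfloor$ vertices and use the monotonicity $\mathcal{I}(G,m)\geq\mathcal{I}(G)$ directly.
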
 

Some recent work \cite{hatami2014} gives sharp asymptotic results for larger graphs for $k\geq 5$. Razborov's theory of flag algebra \cite{razborov2007} has been a useful tool for proving results of inducibility of larger graphs. The case of $\mathcal{I}(P_{4})$ remains an open problem, however. Currently, the best known upper bound \cite{razborov2007} and lower bound \cite{evenzohar2015} of $\mathcal{I}(P_{4})$ have been found using flag algebra and the flagmatic software by \citet{vaughan2012}.

\begin{table}[H]
    \centering
    \begin{tabular}{|c c c c c c|}
    \hline
    $H$ & & $\Bar{H}$ & & $\mathcal{I}(H)$ & Extremal Construction \\
    \hhline{|= = = = = =|}
    $K_{3}=C_{3}$ & 
    \begin{tikzpicture}
        [scale=.4,auto=left,every node/.style={circle,fill=black, minimum size=1em}]
        \node[scale=0.6] (n1) at (0,0) {};
        \node[scale=0.6] (n2) at (2,0) {};
        \node[scale=0.6] (n3) at (1,1) {};
        \draw (n1) -- (n2);
        \draw (n2) -- (n3);
        \draw (n1) -- (n3);
        \end{tikzpicture}
        & $\overline{K_{3}}/A_{3}$ & 
    \begin{tikzpicture}
        [scale=.4,auto=left,every node/.style={circle,fill=black, minimum size=1em}]
        \node[scale=0.6] (n1) at (0,0) {};
        \node[scale=0.6] (n2) at (2,0) {};
        \node[scale=0.6] (n3) at (1,1) {};
        \end{tikzpicture} & 1 & A Complete Graph \\
    $P_{3}=K_{1,2}$ & 
    \begin{tikzpicture}
        [scale=.4,auto=left,every node/.style={circle,fill=black, minimum size=1em}]
        \node[scale=0.6] (n1) at (0,0) {};
        \node[scale=0.6] (n2) at (2,0) {};
        \node[scale=0.6] (n3) at (1,1) {};
        \draw (n1) -- (n2);
        \draw (n1) -- (n3);
        \end{tikzpicture} 
        & $\overline{P_{3}}$ &  
    \begin{tikzpicture}
        [scale=.4,auto=left,every node/.style={circle,fill=black, minimum size=1em}]
        \node[scale=0.6] (n1) at (0,0) {};
        \node[scale=0.6] (n2) at (2,0) {};
        \node[scale=0.6] (n3) at (1,1) {};
        \draw (n2) -- (n3);
        \end{tikzpicture} & $\frac{3}{4}$ & A Complete Bipartite Graph \\
    $K_{4}$ &
    \begin{tikzpicture}
        [scale=.4,auto=left,every node/.style={circle,fill=black, minimum size=1em}]
        \node[scale=0.6] (n1) at (0,0) {};
        \node[scale=0.6] (n2) at (1.5,0) {};
        \node[scale=0.6] (n3) at (1.5,1.5) {};
        \node[scale=0.6] (n4) at (0,1.5) {};
        \draw (n1) -- (n2);
        \draw (n2) -- (n3);
        \draw (n1) -- (n3);
        \draw (n1) -- (n4);
        \draw (n2) -- (n4);
        \draw (n3) -- (n4);
        \end{tikzpicture} 
        & $\overline{K_{4}}/A_{4}$ &
    \begin{tikzpicture}
        [scale=.4,auto=left,every node/.style={circle,fill=black, minimum size=1em}]
        \node[scale=0.6] (n1) at (0,0) {};
        \node[scale=0.6] (n2) at (1.5,0) {};
        \node[scale=0.6] (n3) at (1.5,1.5) {};
        \node[scale=0.6] (n4) at (0,1.5) {};
        \end{tikzpicture} 
        & 1 & A Complete Graph \\
    $S_{4}$ & 
    \begin{tikzpicture}
        [scale=.4,auto=left,every node/.style={circle,fill=black, minimum size=1em}]
        \node[scale=0.6] (n1) at (0,0) {};
        \node[scale=0.6] (n2) at (1.5,0) {};
        \node[scale=0.6] (n3) at (1.5,1.5) {};
        \node[scale=0.6] (n4) at (0,1.5) {};
        \draw (n1) -- (n2);
        \draw (n1) -- (n3);
        \draw (n1) -- (n4);
        \end{tikzpicture} 
        & $\overline{S_{4}}$ &
    \begin{tikzpicture}
        [scale=.4,auto=left,every node/.style={circle,fill=black, minimum size=1em}]
        \node[scale=0.6] (n1) at (0,0) {};
        \node[scale=0.6] (n2) at (1.5,0) {};
        \node[scale=0.6] (n3) at (1.5,1.5) {};
        \node[scale=0.6] (n4) at (0,1.5) {};
        \draw (n2) -- (n3);
        \draw (n2) -- (n4);
        \draw (n3) -- (n4);
        \end{tikzpicture}& $\frac{1}{2}$ & A Complete Bipartite Graph \\
    $C_{4}=K_{2,2}$ &
    \begin{tikzpicture}
        [scale=.4,auto=left,every node/.style={circle,fill=black, minimum size=1em}]
        \node[scale=0.6] (n1) at (0,0) {};
        \node[scale=0.6] (n2) at (1.5,0) {};
        \node[scale=0.6] (n3) at (1.5,1.5) {};
        \node[scale=0.6] (n4) at (0,1.5) {};
        \draw (n1) -- (n2);
        \draw (n2) -- (n3);
        \draw (n1) -- (n4);
        \draw (n3) -- (n4);
    \end{tikzpicture} & $\overline{C_{4}}$ &
    \begin{tikzpicture}
        [scale=.4,auto=left,every node/.style={circle,fill=black, minimum size=1em}]
        \node[scale=0.6] (n1) at (0,0) {};
        \node[scale=0.6] (n2) at (1.5,0) {};
        \node[scale=0.6] (n3) at (1.5,1.5) {};
        \node[scale=0.6] (n4) at (0,1.5) {};
        \draw (n1) -- (n3);
        \draw (n2) -- (n4);
        \end{tikzpicture}& $\frac{3}{8}$ & A Complete Bipartite Graph \\
    $K_{paw}$ &
    \begin{tikzpicture}
        [scale=.4,auto=left,every node/.style={circle,fill=black, minimum size=1em}]
        \node[scale=0.6] (n1) at (0,0) {};
        \node[scale=0.6] (n2) at (1.5,0) {};
        \node[scale=0.6] (n3) at (1.5,1.5) {};
        \node[scale=0.6] (n4) at (0,1.5) {};
        \draw (n1) -- (n2);
        \draw (n2) -- (n3);
        \draw (n1) -- (n3);
        \draw (n2) -- (n4);
        \end{tikzpicture}& $\overline{K_{paw}}$ & 
    \begin{tikzpicture}
        [scale=.4,auto=left,every node/.style={circle,fill=black, minimum size=1em}]
        \node[scale=0.6] (n1) at (0,0) {};
        \node[scale=0.6] (n2) at (1.5,0) {};
        \node[scale=0.6] (n3) at (1.5,1.5) {};
        \node[scale=0.6] (n4) at (0,1.5) {};
        \draw (n1) -- (n4);
        \draw (n3) -- (n4);
        \end{tikzpicture} & $\frac{3}{8}$ & Two Disjoint Complete Bipartite Graphs \\
    $K_{1,1,2}$ &
    \begin{tikzpicture}
        [scale=.4,auto=left,every node/.style={circle,fill=black, minimum size=1em}]
        \node[scale=0.6] (n1) at (0,0) {};
        \node[scale=0.6] (n2) at (1.5,0) {};
        \node[scale=0.6] (n3) at (1.5,1.5) {};
        \node[scale=0.6] (n4) at (0,1.5) {};
        \draw (n2) -- (n3);
        \draw (n1) -- (n3);
        \draw (n1) -- (n4);
        \draw (n2) -- (n4);
        \draw (n3) -- (n4);
        \end{tikzpicture}& $\overline{K_{1,1,2}}$ &
            \begin{tikzpicture}
        [scale=.4,auto=left,every node/.style={circle,fill=black, minimum size=1em}]
        \node[scale=0.6] (n1) at (0,0) {};
        \node[scale=0.6] (n2) at (1.5,0) {};
        \node[scale=0.6] (n3) at (1.5,1.5) {};
        \node[scale=0.6] (n4) at (0,1.5) {};
        \draw (n1) -- (n2);
        \end{tikzpicture}& $\frac{72}{125}$ & A Complete 5-equipartite Graph \\
    $P_{4}$ &
    \begin{tikzpicture}
        [scale=.4,auto=left,every node/.style={circle,fill=black, minimum size=1em}]
        \node[scale=0.6] (n1) at (0,0) {};
        \node[scale=0.6] (n2) at (1.5,0) {};
        \node[scale=0.6] (n3) at (1.5,1.5) {};
        \node[scale=0.6] (n4) at (0,1.5) {};
        \draw (n1) -- (n2);
        \draw (n2) -- (n3);
        \draw (n1) -- (n4);
        \end{tikzpicture} & $\overline{P_{4}}$ & 
    \begin{tikzpicture}
        [scale=.4,auto=left,every node/.style={circle,fill=black, minimum size=1em}]
        \node[scale=0.6] (n1) at (0,0) {};
        \node[scale=0.6] (n2) at (1.5,0) {};
        \node[scale=0.6] (n3) at (1.5,1.5) {};
        \node[scale=0.6] (n4) at (0,1.5) {};
        \draw (n1) -- (n3);
        \draw (n2) -- (n4);
        \draw (n3) -- (n4);
        \end{tikzpicture}& $?$ & Undetermined \\
    \hline
    \end{tabular}
    \caption{The known inducibility of graphs of order 3 and 4 \cite{evenzohar2015}.}
    \label{known inducibility}
\end{table}

\subsection{Some Bounds and Construction Methods}
The inducibility of some family of graphs have been determined, i.e., complete graph $K_{k}$ \cite{pippenger75}, complete bipartite graph $K_{\rho,\rho}$,$K_{\rho,\rho +1}$ \cite{pippenger75} and 
complete multipartite graphs $K_{k,\ldots,k}$ \cite{brownsidorenko1994}. Inducibility of various graph classes have also been studied, such as $d$-ary trees \cite{czabarka2020darytrees}, rooted trees \cite{dossouolory2018}, directed paths \cite{choi2020}, oriented stars \cite{huang2014}, net graphs \cite{blumenthalNet2021} and random Cayley graphs \cite{fox2021}. The precise inducibility of paths of order $k\geq4$ or cycles of order $k\geq 6$ are still unknown \cite{fox2015}.

In the case of $P_{4}$, Exoo's construction \cite{exoo1986} gave an initial lower bound of $\frac{960}{4877}\approx 0.1968$ and an upper bound of $\frac{1}{3}$. The bounds were attained by the of blow-up of the \emph{Paley Graph}. The upper bound was first improved to $\approx 0.2064$ by \citet{hirst2011} using semi-definite programming methods. \citet{vaughan2012} found the currently best known upper bound of $\approx 0.204513$ using Flag algebra method \cite{razborov2007}. \citet{evenzohar2015} improved the lower bound to $\frac{1173}{5824}\approx 0.2014$ using a combination of the Erd\"{o}s conjecture \cite{erdos1969} and Thomason's construction \cite{thomason1997}. Table \ref{tab:summary P4} summarises the results. 

\begin{table}[htb]
    \centering
    \begin{tabular}{|p{0.2\linewidth} p{0.25\linewidth} p{0.25\linewidth} p{0.25\linewidth}|}
    \hline
    Name \& Author(s) & Lower Bound & Upper Bound & Construction \\
    \hhline{|= = = =|}
    Pippenger \& Golumbic~\cite{pippenger75}, Exoo \cite{exoo1986} & $\mathcal{I}(P_{k})\geq \dfrac{k!}{(k+1)^{k-1}-1}$ & $\mathcal{I}(P_{k})\leq \dfrac{k!}{2(k-1)^{k-1}}$ & Nested blow-up of graphs. \\
    \hline
    Exoo \cite{exoo1986} & 
    $ \mathcal{I}(P_{4}) \geq \dfrac{960}{4877}  
    \approx 0.1968$
    & 
    $\mathcal{I}(P_{4}) \leq \dfrac{1}{3} \approx 0.3333$
    & Paley Graphs of order $p \cong 1\pmod{4}$. \\
    \hline
    Hirst \cite{hirst2011} & \emph{unknown} & $\mathcal{I}(P_{4}) \leq 0.2064$ & Semi-definite method. \\
    \hline
    Vaughan \cite{vaughan2012} & \emph{unknown} & $\mathcal{I}(P_{4})\approx 0.204513$ & Flag algebra calculus. \\
    \hline
    Even-Zohar \cite{evenzohar2015} &
    $ \mathcal{I}(P_{4}) \geq \dfrac{1173}{5824} 
     \approx 0.2014 $
    & \emph{unknown} & Combination of Erd\"{o}s conjecture \cite{erdos1969} and Thomason's construction \cite{thomason1997}. \\
    \hline
    \end{tabular}
    \caption{Summary of known bounds and constructions for $P_{4}$.}
    \label{tab:summary P4}
\end{table}

The exact inducibility of only some 5-vertex graphs has been determined. The exact inducibility results for some of the 5-vertex graphs were found using nested blow-ups \cite{evenzohar2015}. The authors also adapted the Erd\"{o}s-R\'{e}nyi random graph as the extremal construction for some of the 5-vertex graphs. For further readings on the inducibility of 5-vertex graphs and the extremal construction, the reader is referred to \cite{evenzohar2015}.

It is interesting to note that random graphs and probabilistic methods have been one of the chosen tools in solving the inducibility problem for some classes of graphs \cite{evenzohar2015,fox2021,yuster2019}. Currently, the best known validation tool for extremal combinatorics is the flagmatic \cite{vaughan2012} software based on Flag algebra \cite{razborov2007}. However, the exact inducibility and extremal construction of some graph classes are not yet determined, including $P_{4}$ and some of the graphs on 5-vertices.

\subsection{Motivation}
In this paper, we will investigate the inducibility of a family of \emph{circulant} graphs. These are special cases of \emph{Cayley graphs}. 

\begin{definition}
Let $G$ be a finite group and $S$ be a subgroup of $G$ that contains non-identity elements. The \emph{Cayley} graph  $X=\mathcal{C}(G,S)$ is an undirected graph with vertex set $V(X)=G$ and edge set $E(X)=\{g,gh : g\in G, h\in S\}$, such that $S$ is symmetric and generates $G$.   
\end{definition}

Cayley graphs contain symmetrical properties 
and are  widely studied in many disciplines. In  areas of computer science, Cayley graphs have been used in quantum error correction and error-correcting codes 
in quantum systems \cite{couvreur13,tomic13,vandermolen22,zemor09}. Further, this family of graphs have also been studied in relation to Boolean functions \cite{bernasconi99,pourfaraj20,riera18}. In network topology, Cayley graphs are  tools for studying the impact on the reliability of a network, fault tolerance and effective routing (communication) within networks, where vertices represent processing elements and edges represent the communication lines in the network \cite{camelo14,gu18,heydemann97,hsieh05,jiang2019,mokhtar17,shahzamanian10,song11,vadapalli95,vadapalli96,xu17,zhao2020}. 

In biology, Cayley graphs are used in the areas of genomics, particularly in the effects of genome arrangements based on permutation of groups and graph convexity, since permutations are used to represent gene sequences in genomes and chromosomes \cite{cunha18,cunha19,egri14,francis20,li08,moulton12}. In the areas of chemistry, Cayley graphs have been used for tracking atoms as a result of a sequence of chemical reactions, in order to anticipate the occurrence of the product molecules \cite{hellmuth20,nojgaard21}. Chemical graphs represented by Cayley graphs are also used to study the \emph{Wiener Index} \cite{knor19}. 

Cayley graphs were originally first used in explaining the concept of abstract groups, and have been of particular interests amongst group theorists. One of the problems in this area utilises group theory to study the spectra of this family of graphs \cite{babai79,buser88,qin09}. Another problem in this area investigates the relationship between finite groups on surfaces and Cayley graphs \cite{bonfert11,bozejko06,tucker83}.

Circulant graphs are a special subset of Cayley graphs. They are Cayley graphs for the cyclic group $\mathbb{Z}_{n}$ \cite{godsil12}. A 3-jump circulant graph on 12 vertices is an undirected Cayley graph. Like Cayley graphs, circulant graphs also have symmetrical properties that enable rerouting of networks, network security and even used in peer-to-peer networking \cite{kaufmann09,li98,liu09}. This family of graphs has a nice cyclic property that contains extra connections (graph edges). 

In networks, these circulant graphs are used in terminal rerouting, communication or broadcasting problems \cite{cai99,cai552,obradovic05,perez-roses22, romanov20}. For example, if there is a redundancy in a traffic network, the (extra) edges can prevent unnecessary long routes for information to travel between terminals.  Further, the family of circulant graphs are also studied in the shortest path problems \cite{cai552, Lu16}. The topology and metrics of circulant graphs are also of interest \cite{alspach79,imran12,munir16,muzychuk97,shparlinski06}. 




In most cases, circulant graphs have specified \emph{jump lengths}. These jump lengths can be constant or non-constant. One of the simplest families of circulant graphs are cycles. A cycle  of order $n$ is a 1-jump circulant graph where $DLG(a,b) = DLG(-1,1)$. Although, cycles are one of the highly studied families of graphs in the areas of inducibility, the exact inducibility of cycles of order $k\geq 6$ still remains unknown. In this paper, we are interested in 2-jump circulant graphs, or \emph{Double loop graphs}. 


\begin{definition}[Double loop graph \cite{boesch1984}] 
Let $a,b,n$ be integers with $0<a\neq b<\frac{n}{2}$; the vertices of the \emph{double loop graph} $DLG(a,b)$ are the integers modulo $n$, each vertex $i$ being joined to the four vertices $i\pm a \pmod{n}$, $i\pm b \pmod{n}$. 
\end{definition}

\begin{figure}[H]
    \centering
    \begin{tikzpicture}
    [scale=.7,auto=left,every node/.style={circle,fill=black}]
    \node (n1) at (0,0) {};
    \node (n2) at (2,0) {};
    \node (n3) at (-1,-1.5) {};
    \node (n4) at (3,-1.5) {};
    \node (n5) at (0,-3) {};
    \node (n6) at (2,-3) {};
    
    \draw (n1) edge (n2);
    \draw (n1) edge (n3);
    \draw (n2) edge (n3);
    \draw (n2) edge (n4);
    \draw (n3) edge (n5);
    \draw (n4) edge (n6);
    \draw (n5) edge (n6);
    \draw (n2) edge (n6);
    \draw (n3) edge (n6);
    \draw (n1) edge (n4);
    \draw (n1) edge (n5);
    \draw (n4) edge (n5);
    
    \end{tikzpicture}
    \caption{A double loop graph $DLG(1,2)$ on 6 vertices.}
    \label{fig:my_label}
\end{figure}

Double loop graphs are circulant graphs and so are of interest in the areas discussed above.  Work has been done of finding a minimum 2-terminal routing using such graphs \cite{cai552}. Other topics include cycle decomposition \cite{bogdanowicz15}, the shortest path problems \cite{gomez05}, the dispersability of circulant graphs \cite{joslin21} and optimal routing \cite{gomez07}.  In this paper, we give bounds on the inducibility of 2-jump circulant graphs, $DLG(1,2)$. 

\section{Notations and Definitions} 
In this section, we present some basic definitions and notations that are used in this paper. All graphs in this paper are simple.

A graph $G$ is a pair $(V,E)$, such that $V$ is the (finite) set of vertices and $E$ is a subset of all unordered pair of vertices. The \emph{order} of a graph is the number of vertices, whereas the \emph{size} of a graph is the number of edges. Let $u,v\in V(G)$, we say that $u$ is \emph{adjacent} to $v$ if there exists an edge $\{u,v\}\in E(G)$. We say that the edge $\{u, v\}$ is incident to vertices $u$ and $v$.

\begin{definition}
Let $G$ and $H$ be graphs. If there exists an edge-preserving bijective function $\phi:E(G)\rightarrow E(H)$, such that:
\[
\forall(u,v)\in V(G)\times V(G), (u,v)\in E(G) \leftrightarrow (\phi(u),\phi(v))\in E(H),
\]
then $G$ and $H$ are isomorphic, denoted $G\cong H$.
\end{definition}

\begin{definition}
Let $G$ and $G_{1}$ be graphs of order $n$ and $k$ respectively, where
$n\geq k$. We say that $G_{1}$ is a subgraph of $G$ if $V(G_{1})\subseteq V(G)$ and $E(G_{1})\subseteq E(G)$. The graph $G_{1}$ is an induced subgraph of $G$ if all the edges between the pairs of vertices in $V(G_{1})$ from $E$ are in $E(G_{1})$.
\end{definition}

\section{Proof for Double Loop Graph (2-jump Circulant Graph)}

In this section, we will give an upper bound on the inducibility of a family of double loop graphs. The family of double loop graphs contains symmetrical properties similar to cycles $C_{n}$, which makes it a natural choice of graphs. We first provide some necessary definitions that will be used in the proof. 

\begin{definition}[Chain graph $L_{k}$]
We recursively define a chain graph $L_{k}$ as follows. The graph $L_{0}$ is the triangle with vertices $v_{0},v_{1}$ and $v_{2}$. A chain graph $L_{k}$ is a chain of triangles formed by connecting $L_{k-1}$ to a new vertex $v_{k+2}$ to the edge $\{v_{k},v_{k+1}\}$, for $k>0$. 

\end{definition}

\begin{figure}[H]
    \centering
    \begin{tikzpicture}
    [scale=.7,auto=left,every node/.style={circle,fill=black}]
    \node[label=below:$v_{1}$] (n1) at (0,0) {};
    \node[label=160:$v_{0}$] (n2) at (-1,1) {};
    \node[label=above:$v_{2}$] (n3) at (1,1) {};
    \node[label=-45:$v_{3}$] (n4) at (2,0) {};
    \node[label=above:$v_{4}$] (n5) at (3,1) {};
    \node[label=below:$v_{5}$] (n6) at (4,0) {};
    \node[label=above:$v_{6}$] (n7) at (5,1) {};
    \node[draw=blue,fill=none,double,fit=(n1) (n2) (n3) ,inner sep=0pt,ellipse,label={[text=blue]215:$L_{0}$}] (tmp) {};
     \node[draw=red,fill=none,double,fit=(n1) (n2) (n3) (n4) ,inner sep=0.5pt,circle,label={[text=red]90:$L_{1}$}] (tmp) {};
    
    \draw (n1) edge (n2);
    \draw (n1) edge (n3);
    \draw (n2) edge (n3);
    \draw (n1) edge (n4);
    \draw (n3) edge (n4);
    \draw (n4) edge (n5);
    \draw (n3) edge (n5);
    \draw (n5) edge (n6);
    \draw (n4) edge (n6);
    \draw (n6) edge (n7);
    \draw (n5) edge (n7);
    \end{tikzpicture}
    \caption{An illustration of a chain graph $L_{4}$ on 7 vertices. $L_{0}$ is the triangle graph circled in blue, while the chain graph $L_{1}$ is circled in red.}
    \label{fig:chaingraph}
\end{figure}

\noindent
We provide the following definitions for a \emph{good tuple}, \emph{loopy $k$-tuple} and a \emph{good prefix}:
\begin{definition}[Good tuple]
We define a \textit{good tuple} of order $l>0$ recursively.  For $l\leq 3$, the ordered sequence of vertices (ordered tuple), $(v_{0}, v_{1},\ldots, v_{l-1})$ is a good tuple if it induces the complete graph $K_{l}$.  For any $l>3$,  the ordered tuple $(v_{0}, v_{1}, \ldots, v_{l-1})$ is a good tuple if it induces the chain graph $L_{l-3}$ and the ordered tuple $(v_{0}, v_{1},\ldots,v_{l-2})$ is a good tuple.
\end{definition}

\begin{definition}[Loopy $k$-tuple]
A loopy $k$-tuple is an ordered sequence of vertices $(v_{0}, v_{1}, \ldots, v_{k-1})$ which induces a copy of the double loop graph of order $k$, and for $l< k-1$, every ordered tuple $(v_{0}, v_{1}, \ldots ,v_{l})$ is a good tuple.    
\end{definition}

\begin{definition}[Good prefix]
An ordered tuple $(v_{0}, v_{1}, \ldots v_{l-1})$ of length $l\leq k$ is a good prefix, if it is a good tuple of length $l<k$, or a loopy $k$-tuple.
\end{definition} 

\noindent
We state the following theorem.
\begin{theorem}
Every graph on $n$ vertices contain at most $\dfrac{27n^{k}}{k^{k}}$ induced copies of the double loop graph $DLG(1,2)$ for $k\geq 5$.
\label{thm:DLG}
\end{theorem}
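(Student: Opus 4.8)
The plan is to bound the number of induced copies of $DLG(1,2)$ by a counting argument over \emph{good prefixes}, mimicking the strategy that gives the bound $\mathcal{P}(C_k,n)\le 2n^k/k^k$ for cycles (Theorem~\ref{theorem1}). The key structural observation is that $DLG(1,2)$ on the vertex set $\{0,1,\dots,k-1\}$, with its natural cyclic labelling, has the property that for each $l$ with $3\le l\le k-1$ the initial segment $(0,1,\dots,l-1)$ induces the chain graph $L_{l-3}$ (a chain of triangles), and the first three vertices induce $K_3$; deleting the last vertex from a chain graph gives a shorter chain graph. Hence every induced copy of $DLG(1,2)$ in a host graph $G$, read off in this cyclic order starting from any of its $k$ vertices, yields a loopy $k$-tuple, and conversely every loopy $k$-tuple gives such a copy. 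Since $DLG(1,2)$ has a cyclic automorphism and we may also reverse orientation, each unlabelled induced copy is counted a controlled number of times among loopy $k$-tuples; I would first pin down this multiplicity (the automorphism group of $DLG(1,2)$ of order $k$ has size at least $2k$ for $k\ge 5$, so each copy corresponds to at least $2k$ loopy $k$-tuples), which is where the constant savings come from.

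First I would set up the counting. Let $T_l$ denote the number of good prefixes of length $l$ in $G$ (with $T_1=n$). The heart of the argument is a recursive/averaging bound: a good prefix of length $l<k$ extends to a good prefix of length $l+1$ by choosing a new vertex $v_l$ that, together with the last two vertices $v_{l-1},v_{l-2}$, forms a triangle attached to the edge $\{v_{l-1},v_{l-2}\}$ (for $l\ge 3$), or completes $K_{l+1}$ (for $l\le 2$). I would bound the number of such extensions by relating it to a quantity that telescopes. The standard device here is to show, via convexity (Jensen / power-mean) applied to the degree-type sums, that $T_{l+1}\le \frac{n}{\text{(something)}}T_l$ in an amortized sense, or more precisely that $T_k \le C\cdot n^k/k^k$ for an explicit constant; then dividing by the per-copy multiplicity $\ge 2k$ and comparing with the cycle argument gives the factor $27$. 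Concretely, I expect the bound to come out as $\mathcal{P}(DLG(1,2),n)\le \frac{1}{2k}\cdot (\text{bound on number of loopy }k\text{-tuples})$, and the loopy-tuple count is controlled by summing, over the choice of the ``closing'' structure, products of neighbourhood sizes, each such sum being maximized when the graph is suitably pseudo-random — this is precisely the regime handled by the Kráľ–style argument behind Theorem~\ref{theorem1}, and one borrows $2n^k/k^k$-type estimates from there, losing a bounded factor because $DLG(1,2)$ forces triangles rather than just a path.

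A cleaner route, which I would try first, is to piggyback directly on Theorem~\ref{theorem1}: every loopy $k$-tuple $(v_0,\dots,v_{k-1})$ has the property that consecutive triples $(v_{i-2},v_{i-1},v_i)$ are triangles, so in particular $(v_0,v_1,\dots,v_{k-1})$ with edges only between $v_i$ and $v_{i+1}$ traces a closed walk; more usefully, the chain-graph structure means the ``spine'' $v_0,v_2,v_4,\dots$ together with the interleaved vertices behaves like two interleaved near-cycles. If I can injectively (up to bounded multiplicity) map induced copies of $DLG(1,2)$ into configurations counted by the cycle bound — e.g.\ by exhibiting a $C_k$ or a pair of cycles inside each copy whose embeddings determine the copy up to $O(k)$ choices — then $\mathcal{P}(DLG(1,2),n) \le c\cdot \mathcal{P}(C_k,n) \le c\cdot 2n^k/k^k$ with $c$ small enough that $2c \le 27$. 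The main obstacle, and where I expect to spend the most effort, is making this reduction \emph{injective up to a small explicit multiplicity}: an induced $DLG(1,2)$ contains many cycles and many copies of its own substructures, so I must choose a canonical substructure (canonical up to the order-$2k$ automorphism group of $DLG(1,2)$) and argue that the host-graph embedding of that substructure, plus a bounded amount of extra data, reconstructs the whole copy — this rigidity of $DLG(1,2)$ (it is determined by, say, its ``triangle path'') is the crux, and it is exactly why chain graphs, good tuples, and loopy tuples were introduced. Once the rigidity lemma is in hand, the arithmetic $2c\le 27$ is routine bookkeeping over the at most $k$ starting vertices, $2$ orientations, and the few ways a good prefix can be completed to a loopy $k$-tuple versus remain a good tuple.
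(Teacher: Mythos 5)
Your first route is the right skeleton --- it is essentially the paper's argument (an ordered-tuple count in the style of the cycle bound, with good prefixes, loopy $k$-tuples, and the $2k$ multiplicity from rotations and reflections) --- but it is a plan rather than a proof, and the one thing the theorem actually asserts, the constant $27$, is never derived. The paper's proof works by defining a probability distribution on loopy $k$-tuples, $P(D)=\prod_i P_i(v_i\mid v_0,\dots,v_{i-1})$ with $P_i=1/n_i$ where $n_i$ counts the admissible extensions of a good prefix, proving that the total mass of this distribution is at most $1$, and then showing by AM--GM that the $2k$ loopy tuples arising from any single induced copy of $DLG(1,2)$ together receive mass at least $k^k/(27n^k)$. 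The factor $27=3^3$ has a specific combinatorial origin: when one multiplies $\prod_j 1/P(D_j)$ over the $k$ rotations $D_j$ and asks how much a candidate vertex $v'$ can contribute across all the $n_{i,j}$, a vertex attached to an edge $\{v_i,v_{i+1}\}$ of the cyclic order can serve as the next vertex for up to \emph{three} of the rotated tuples, so three of the factors must each be discounted by $3$, giving $54=2\cdot3^3$ and hence $27$ after the two orientations are folded in. Your sketch replaces all of this with ``$T_{l+1}\le \frac{n}{(\text{something})}T_l$ in an amortized sense'' and ``the arithmetic $2c\le 27$ is routine bookkeeping''; neither the normalization lemma nor the per-copy mass bound is stated, so the constant is not actually obtained.

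The ``cleaner route'' you say you would try first does not work. Theorem~\ref{theorem1} bounds the number of \emph{induced} copies of $C_k$, and an induced copy of $DLG(1,2)$ contains no induced $C_k$ on its $k$ vertices: every Hamiltonian cycle of $DLG(1,2)$ carries chords, which is exactly what makes the graph a double loop rather than a cycle. So there is no configuration counted by the cycle theorem into which you can inject copies of $DLG(1,2)$, no matter how rigid the triangle chain is, and the inequality $\mathcal{P}(DLG(1,2),n)\le c\cdot\mathcal{P}(C_k,n)$ has no basis. If you want to salvage the overall argument you must commit to the first route and supply the two missing pieces: the lemma that the loopy-tuple distribution has total mass at most $1$, and the explicit lower bound $\sum_{j}P(D_j)\ge k^k/(54n^k)$ for the rotations of a fixed induced copy, from which the count $27n^k/k^k$ follows immediately.
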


\begin{proof}
Suppose we have a graph $G$ of $n$, and an integer $k\geq 5$. We will count the number of loopy $k$-tuples in the graph $G$. We define the probability distribution $P(D)$ based on conditional probability for each loopy $k$-tuple $D=\left(v_{0},v_{1},\ldots,v_{k-1}\right)$ as

\begin{equation}
    P(D)=\prod^{k-1}_{i=0} P_{i}(v_{i}|v_{i-1}).
\end{equation}
\\
Let $n_{i}$ denote the number of choices for $v_{i}$ given $(v_{0}, \ldots, v_{i-1})$ is a good prefix.  We denote the probability $P_{i}(v_{i}|(v_{0},\ldots,v_{i-1}))=\frac{1}{n_{i}}$.  When $i=0$, we can choose any vertex and so $$P_{0}(v_{0})=\frac{1}{n_{0}}=\frac{1}{n}.$$ 

For $1\leq i\leq k$, the choice for $v_{i}$ must ensure that $(v_{0}, \ldots, v_{i-1})$ is a \emph{good prefix} and the probability for each $P_{i}(v_{i})$ is dependent on the choices of the vertices  $v_{0}, \ldots, v_{i-1}.$

When $i=1,2$, we have
    \begin{align*}
           P_{1}(v_{1})&=\frac{1}{n_{1}}=\frac{1}{deg(v_{0})},\\
   P_{2}(v_{2})&=\frac{1}{n_{2}}=\frac{1}{|N(v_{0})|\cap|N(v_{1})|}
    \end{align*}
where $N(v)$ denotes the neighbours of $v$.

The vertex $v_{k-1}$ can be chosen if and only if the vertex $v_{k-1}$ is adjacent to vertices $v_{k-2},v_{k-3},v_{0}$ and $v_{1}$, but not adjacent to any other vertex belonging to the ordered tuple. 

Recall that a \emph{good prefix} is an ordered tuple of length $l\leq k$.
If a maximal good prefix has length $k$, then it gives a loopy $k$-tuple. If a good tuple of length $l<k$ can not be further extended to a loopy $k$-tuple, then the good tuple  is a maximal good prefix. We state the following lemma for the probability distribution $P(D)$.

\begin{lemma}
The probability distribution $P(D)$ of all loopy $k$-tuples $D$ has a sum of at most 1.
\label{lemma:sum1}
\end{lemma}

Consider the vertices that induce a $DLG(1,2)$ such that the loopy $k$-tuple $D_{j}$ is of the sequence ($v_{j}v_{j+1}v_{j+2}\ldots v_{j+k-1}$) for $j=0,1,\ldots,k-1$, indices  $\bmod$ $k$. We want to show that

\begin{equation}
    \dfrac{k^{k}}{54n^{k}}\leq P(D_{0})+\ldots+P(D_{k-1}).
    \label{eq:sumbound}
\end{equation}
Each $D_{j}$ corresponds to 2 loopy $k$-tuples namely $v_{j}v_{j+1}v_{j+2}\ldots v_{j+k-1}$ and $v_{j+k-1}v_{j+k-2}v_{j+k-3}\ldots v_{j}$. Thus, we have a total of $2k$ loopy $k$-tuples for each induced $DLG(1,2)$.   Thus, \eqref{eq:sumbound} would imply that the sum of these $2k$ loopy $k$-tuples is at least $\dfrac{k^{k}}{27n^{k}}$. It follows from Lemma \ref{lemma:sum1} that there are at most $\dfrac{27n^{k}}{k^{k}}$ induced $DLG(1,2)$. To prove \eqref{eq:sumbound}, consider the AM-GM inequality, such that
\begin{equation}
    \dfrac{P(D_{0})+P(D_{1})+P(D_{2})+\ldots+P(D_{k-1})}{k}\geq \left(\prod^{k-1}_{j=0} P(D_{j})\right)^{\frac{1}{k}}.
    \label{AM-GM}
\end{equation}

Let $n_{i,j}$ be the number of choices for $v_{i}$ in $D_{j}$. Note, $n_{0,j}=n$ for all $j$. Recall that for $i=2,\ldots,k-2$, a vertex $v_{i}$ is chosen from the neighbours of vertex $v_{i-1}$ given that $v_{0}v_{1}v_{2}\ldots v_{i-1}$ is a good prefix. The last vertex $v_{k-1}$ is chosen if it is only adjacent to $v_{k-2},v_{1},v_{2}$ and $v_{0}$, but not adjacent to any other vertex that belongs in the good tuple. We take the right-hand side of the inequality \eqref{AM-GM} to get the following,

\begin{align}
    \left(\prod^{k-1}_{j=0} \dfrac{1}{P(D_{j})}\right)^{\frac{1}{k(k-1)}} &= \prod^{k-1}_{j=0}\left(n_{0,j}\frac{n_{1,j}}{3}\frac{n_{2,j}}{3}{n_{3,j}}\ldots n_{k-2,j} \frac{n_{k-1,j}}{3}\right)^{\frac{1}{k(k-1)}}
    \label{eq:right2}
\end{align}

Let $v'$ be a potential candidate for $v_{i}$ and consider the contributions of $v'$ to each of the $n_{i,j}$ term for each $j=0,1,\ldots,k-1$. Due to the symmetrical properties of $DLG(1,2)$, we need only consider the case for $j=0$.

We know that $n_{0,j}=n$ for all $j$ by definition. Similarly, we know that each tuple $D_{j}$ contributes to 2 loopy $k$-tuples. Now let $l>i$ be the smallest integer in $[0,k)$ such that $v'$ is adjacent to  $v_{i}$ and $v_{l}$, and $\{v_{i},v_{l}\}$ gives an edge that belongs to the ordered tuple $(v_{0}, v_{1}, \ldots, v_{k-1})$.

If $i=0$ and $l=1$, then $v'$ contributes to each $n_{1,0}$, $n_{2,0}$ and $n_{k-1,0}$, giving the terms $\frac{n_{1,j}}{3},\frac{n_{2,j}}{3}$ and $\frac{n_{k-1,j}}{3}$ from equation \eqref{eq:right2}. Thus, 
\begin{align}
    \left(\prod^{k-1}_{j=0} \dfrac{1}{P(D_{j})}\right)^{\frac{1}{k(k-1)}} 
    &=\left((2\cdot n\cdot(3^{3}))^{k}\prod^{k-1}_{j=0}\frac{n_{1,j}}{3}\frac{n_{2,j}}{3}n_{3.j}\cdots n_{k-2,j} \frac{n_{k-1,j}}{3}\right)^{\frac{1}{k(k-1)}}\\
       &= (54n)^{\frac{1}{k-1}}\left(\prod^{k-1}_{j=0}\frac{n_{1,j}}{3}\frac{n_{2,j}}{3}n_{3.j}\cdots n_{k-2,j} \frac{n_{k-1,j}}{3}\right)^{\frac{1}{k(k-1)}}\\
       \label{eq:amgmequality}
       \intertext{From equation \eqref{AM-GM},}
    \left(\prod^{k-1}_{j=0} \dfrac{1}{P(D_{j})}\right)^{\frac{1}{k(k-1)}}   &\leq \dfrac{(54n)^{\frac{1}{k-1}}}{k(k-1)} \left(\sum^{k-1}_{j=0} \left(\frac{n_{1,j}}{3}+\frac{n_{2,j}}{3}+{n_{3,j}}+\ldots+n_{k-2,j}+\frac{n_{k-1,j}}{3}\right )\right)
   \label{rightsum}
\end{align}
For all $i>0$, if $l=i+1$, $v'_{i}$ contributes to $n_{i+2,0}$. However, if $l>i+1$, then $v'$ does not contribute to the sum for any $j$. Considering all possible cases, $v'$ contributes at most 1 for every $j$. Thus, $v'$ adds at most $k$ to the total sum in \eqref{rightsum}. As there are $n$ vertices in the graph $G$, each vertex $v'\in V(G)$ contributes at most 1 to the sum in \eqref{rightsum}, making the whole sum at most $n\cdot k$. We derive the upper bound (Equation \ref{rightsum}) from Equation  \ref{eq:amgmequality} using the AM-GM inequality, and it follows that 

\begin{align}
    \left(\prod^{k-1}_{j=0} \dfrac{1}{P(D_{j})}\right)^{\frac{1}{k(k-1)}} &\leq \dfrac{(54n)^{\frac{1}{k-1}}}{k(k-1)}\cdot n\cdot k = \dfrac{(54n)^{\frac{1}{k-1}}\cdot n\cdot k}{k(k-1)}\\
    \intertext{Which then gives}
    \left(\prod^{k-1}_{j=0} \dfrac{1}{P(D_{j})}\right)^{\frac{1}{k}} &\leq \dfrac{54n^{k}}{(k-1)^{k-1}} \\
    \intertext{Thus,}
    \left(\prod^{k-1}_{j=0}P(D_{j})\right)^{\frac{1}{k}} &\geq \dfrac{(k-1)^{k-1}}{54n^{k}}.
\end{align}
\end{proof}

\subsection{Further Results for $k$}
In this section, we introduce constructions for the $DLG(1,2)$ graph to obtain a lower bound for some $k$. First, notice that when $k=5$, the $DLG(1,2)$ is the complete graph $K_{n}$ of order 5. Using the upper bound obtained in equation \eqref{eq:bound1}, we obtain the following lemma:

\begin{lemma}
The inducibility of $DLG(1,2)$ is given by $\mathcal{I}(DLG(1,2))\leq \dfrac{27k!}{k^{k}}$.
\label{dlglemma}
\end{lemma}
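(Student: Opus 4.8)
The plan is to read off Lemma \ref{dlglemma} as a near-immediate corollary of Theorem \ref{thm:DLG}. Recall that the inducibility $\mathcal{I}(H)$ of a $k$-vertex graph $H$ relates to the maximum count $\mathcal{P}(H,n)$ via $\mathcal{I}(H) = \lim_{n\to\infty} \mathcal{P}(H,n)/\binom{n}{k}$. So the first step is simply to combine the bound $\mathcal{P}(DLG(1,2),n) \leq 27 n^k / k^k$ from Theorem \ref{thm:DLG} with the asymptotic estimate $\binom{n}{k} = \frac{n^k}{k!}(1 + o(1))$ as $n \to \infty$ with $k$ fixed. Dividing gives
\begin{equation*}
\mathcal{I}(DLG(1,2), n) = \frac{\mathcal{P}(DLG(1,2),n)}{\binom{n}{k}} \leq \frac{27 n^k / k^k}{\frac{n^k}{k!}(1+o(1))} = \frac{27 k!}{k^k}(1 + o(1)),
\end{equation*}
and taking $n \to \infty$ yields $\mathcal{I}(DLG(1,2)) \leq \frac{27 k!}{k^k}$.

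Second, I would make explicit the hypothesis carried over from Theorem \ref{thm:DLG}, namely $k \geq 5$; the statement of the lemma should be read with this constraint in force (for $k \leq 4$ the object $DLG(1,2)$ is degenerate anyway, since $DLG(a,b)$ requires $0 < a \neq b < n/2$, forcing $n \geq 5$). One could also remark, as the surrounding text does, that for $k = 5$ the graph $DLG(1,2)$ on $5$ vertices is exactly $K_5$, so there the bound can be compared against (or improved by) the exact value of $\mathcal{I}(K_5) = 1$ — but that is a sanity check rather than part of the proof.

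There is essentially no obstacle here: the only thing to be slightly careful about is that $\binom{n}{k} \sim n^k/k!$ is an asymptotic-in-$n$ statement, so the clean inequality $\mathcal{I}(DLG(1,2)) \leq 27k!/k^k$ emerges only in the limit, which is exactly how $\mathcal{I}(H)$ is defined in Equation (4) of the paper. An alternative, fully elementary route avoiding asymptotics is to use the exact inequality $\binom{n}{k} \geq (n/k)^k \cdot \frac{k^k}{k!} \cdot c_{n,k}$ for a factor $c_{n,k} \to 1$, but this adds nothing. I would therefore present the proof in two lines: invoke Theorem \ref{thm:DLG}, divide by $\binom{n}{k}$, and pass to the limit using $\lim_{n\to\infty}\binom{n}{k}\big/\frac{n^k}{k!} = 1$.
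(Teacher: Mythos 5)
Your proposal is correct and is exactly the intended derivation: Theorem \ref{thm:DLG} gives $\mathcal{P}(DLG(1,2),n)\leq 27n^{k}/k^{k}$, and dividing by $\binom{n}{k}\sim n^{k}/k!$ and letting $n\to\infty$ yields the lemma. The paper leaves this step implicit (and its pointer to equation \eqref{eq:bound1} appears to be a misreference to the cycle bound rather than to Theorem \ref{thm:DLG}), so your two-line argument, including the observation that $k\geq 5$ must be carried over as a hypothesis, fills in precisely what the paper intends.
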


By Lemma \ref{dlglemma}, we obtain $\mathcal{I}(DLG(1,2))\leq 1\frac{23}{625}\approx 1.0368$ for $k=5$. This gives a close result to the known result for inducibility of complete graphs $\mathcal{I}(K_{5})=1$. 

For the case where $k=6$, we give a construction that gives a lower bound to the inducibility $\mathcal{I}(DLG(1,2))\geq\frac{10}{81}$. This result brings the gap between the upper bound and the lower bound to a factor of $3$. Our construction uses a partitioning method for $DLG(1,2)$ for $k=6$ and is as follows: Consider a subgraph $C_{4}$ of a $DLG(1,2)$.

We take 3 partitions $P_{1},P_{2},P_{3}$ such that each partition contains $n$ vertices. The partitions $P_{1}$ and $P_{2}$ give a complete bipartite graph $K_{n,n}$ and $P_{3}$ is the set of isolated vertices which are connected to all vertices in $P_{1}$ and $P_{2}$. From each partition, we pick two vertices. The graph induced by these six vertices is $DLG(1,2)$. Thus,
\begin{align}
    \mathcal{P}(DLG(1,2),n) &=\binom{n}{2}\cdot\binom{n}{2}\cdot\binom{n}{2} = \dfrac{n^{3}(n-1)^{3}}{8}.
    \label{countsdlg}
\end{align}
Now, from Equation \eqref{countsdlg}, we have:
\begin{align*}
   \dfrac{\dfrac{n^{3}(n-1)^{3}}{8}}{\binom{3n}{6}} &= \dfrac{n^{3}(n-1)^{3}}{8}\cdot\dfrac{6!}{3n(3n-1)(3n-2)(3n-3)(3n-4)(3n-5)}. 
 \end{align*}
 \noindent
Taking the limit as $N$ approaches $\infty$, we obtain the lower bound on the inducibility of $DLG(1,2)$
 \begin{align}
   \mathcal{I}(DLG(1,2)) &\approx \dfrac{6!\cdot n^{6}}{3^{6}n^{6}}= \dfrac{10}{81}.
     \label{dlgk6}
\end{align}

Recall that a graph $G$ and it's complement share the same inducibility, that is $\mathcal{I}(G)=\mathcal{I}({\overline{G}})$. The complement of the $DLG(1,2)$ on $k=6$ gives 3 disjoint $K_{2}$s. To compare our results with the bound using Theorem \ref{theorem6pip}, we construct $G$ and $G'$ as follows. Take two of the disjoint $K_{2}$s as $G$ and the other $K_{2}$ as a graph $G'$. We know that $\overline{G} = C_{4}$ and $\overline{G'}= N_{2}$. Take $k=4$ and $k'=2$, computing inducibility using the bound from Theorem \ref{theorem6pip} we have:
\begin{align*}
\mathcal{I}(G\wedge G') &\geq \dfrac{6!\cdot4^{4}\cdot2^{2}}{4!\cdot2!\cdot6^{6}}\cdot \frac{3}{8}\cdot 1  =\frac{10}{81}
\end{align*}
which gives the same result as \eqref{dlgk6}.

Now for the case $k=7$ for $DLG(1,2)$, we consider the complement $\overline{DLG(1,2)}$ which is precisely the cycle $C_{7}$. The known conjecture for cycles was given by~\citet{pippenger75}, where $\mathcal{I}(C_{k})\geq \dfrac{k!}{k^{k}-k}$ for $k\geq 5$. Further, ~\citet{kral2019} proved an upper bound for $k\geq 6$ where $\mathcal{I}(C_{k})\leq (2+o(1))\cdot\frac{k!}{k^{k}}$. The current best construction known for $C_{k}$ is the nested blow-up of a $C_{k}$. Thus, for the $\overline{DLG(1,2)}$ for $k=7$, the inducibility is calculated as:
\[
\mathcal{I}(C_{7}) = \mathcal{I}(DLG(1,2)) \geq \frac{5}{817}.
\]
Using the upper bound from our construction we obtain $\mathcal{I}{(DLG(1,2))}\leq 0.165237$, which gives a factor of 27 to our upper bound.

\section{Conclusion}

In this paper, we presented a proof for the upper bound on the inducibility of the family of double loop graphs $DLG(1,2)$ in a given graph $G$. For the case when $k=5$, we showed that our upper bound is indeed close to the known inducibility of $K_{5}$, where $\mathcal{I}(K_{5})=1$. We further showed the lower bound for the counts of $DLG(1,2)$ for $k=6$. We presented a construction for the $DLG(1,2)$. When compared to the bound given by \citet{pippenger75} for disjoint graphs, we showed that our construction is in fact the best possible construction that maximises the number of $DLG(1,2)$ in any given graph $G$. 

Future work include finding the lower bound for this class of graphs for $k\geq 8$. We may also consider finding the bounds on the inducibility for the family of graphs that have some symmetrical properties.

\bibliographystyle{plainnat}
\bibliography{main}

\end{document}